\newtheorem{theorem}{Theorem}
\theoremstyle{remark}
\newtheorem*{remark}{Remark}
\theoremstyle{definition}
\def\C{\mathbb C}
\def\sK{\mathcal K}
\def\zk{\mathcal Z_{\mathcal K}}
\def\Tor{\mathop{\mathrm{Tor}}\nolimits}
\def\pt{\mathit{pt}}
\def\colim{\mathop{\mathrm{colim}}\nolimits}
\newcommand{\cat}[1]{\mbox{\sc #1}}
\begin{document}
\title{On the cohomology of quotients of moment-angle complexes}

\author{Taras Panov}
\address{Department of Mathematics and Mechanics, Moscow
State University
\newline\indent Institute for Theoretical and Experimental Physics,
Moscow, Russia,\quad \emph{and}
\newline\indent Institute for Information Transmission Problems,
Russian Academy of Sciences}
\email{tpanov@mech.math.msu.su}

\thanks{The research was carried out at the IITP RAS and supported by the Russian
Science Foundation (project no.~14-50-00150).}

\maketitle

We describe the cohomology of the quotient $\zk/H$ of a
moment-angle complex $\zk$ by a freely acting subtorus~$H\subset
T^m$. We establishing a ring isomorphism between $H^*(\zk/H,R)$
and an appropriate $\Tor$-algebra of the face ring $R[\sK]$, with
coefficients in an arbitrary commutative ring $R$ with unit. This
result was stated in~\cite[7.37]{bu-pa02} for a field~$R$, but the
argument was not sufficiently detailed in the case of
nontrivial~$H$ and finite characteristic. We prove the collapse of
the corresponding Eilenberg--Moore spectral sequence using the
extended functoriality of~$\Tor$ with respect to `strongly
homotopy multiplicative' maps in the
category~$\cat{dash}$~\cite{munk74}. Our collapse result does not
follow from the general results of~\cite{gu-ma74}
and~\cite{munk74}.

Let $\sK$ be a simplicial complex on $[m]=\{1,\ldots,m\}$. For
each simplex $I\in\sK$, set
\[
  (D^2,S^1)^I=\{(x_1,\ldots,x_m)\in(D^2)^m\colon x_i\in S^1=\partial
  D^2\text{ when }i\notin I\}.
\]
The \emph{moment-angle complex} is the polyhedral product
\[
  \zk=(D^2,S^1)^\sK=\bigcup\nolimits_{I\in\sK}(D^2,S^1)^I\subset(D^2)^m.
\]
$\zk$ is a manifold whenever $\sK$ is a triangulated sphere, and
can be smoothed when $\sK$ is a boundary of a polytope or is a
starshaped sphere (comes from a complete simplicial fan). Also
define
\[
  BT^\sK=(\C P^\infty,\pt)^\sK=\bigcup\nolimits_{I\in\sK}
  BT^I\subset BT^m=(\C P^\infty)^m.
\]
The cohomology of $BT^\sK$ (with coefficients in~$R$) is the
\emph{face ring} of~$\sK$:
\[
  H^*(BT^\sK)\cong R[\sK]=R[v_1,\ldots,v_m]/(v_{i_1}\cdots
  v_{i_k}\colon\{i_1,\ldots,i_k\}\notin\sK ),\quad \deg v_i=2,
\]
and there is a homotopy fibration $\zk\to BT^\sK\to BT^m$. For
more detailed background see~\cite[Ch.~4]{bu-pa15}.

The torus $T^m$ acts on $\zk$ coordinatewise and we consider
freely acting subtori $H\subset T^m$. The manifolds $\zk/H$ have
recently attracted attention as they support complex-analytic
structures, usually  non-K\"ahler, with interesting
geometry~\cite{bo-me06}, \cite{pa-us12}, \cite{ishi}.

We turn $R[\sK]$ into a module over the polynomial ring
$H^*(B(T^m/H))$ via the map $H^*(B(T^m/H))\to
H^*(BT^m)=R[v_1,\ldots,v_m]\to R[\sK]$.

\begin{theorem}
For any commutative ring $R$ with unit, there is an isomorphism of
graded algebras
\[
  H^*(\zk/H;R)\cong\Tor_{H^*(B(T^m/H);R)}(R[\sK],R).
\]
\end{theorem}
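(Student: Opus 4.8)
The plan is to exhibit $\zk/H$ as the fibre of a bundle over $B(T^m/H)$ with total space $BT^\sK$, and then to analyse the Eilenberg--Moore spectral sequence of that bundle. Write $T'=T^m/H$. The fibration $\zk\to BT^\sK\to BT^m$ recalled above identifies $BT^\sK$ with the Borel construction $ET^m\times_{T^m}\zk$. Since $H$ acts freely, the homotopy quotient coincides with the honest quotient, $EH\times_H\zk\simeq\zk/H$, and choosing a splitting of $1\to H\to T^m\to T'\to 1$ gives
\[
  BT^\sK\simeq ET^m\times_{T^m}\zk\simeq ET'\times_{T'}(EH\times_H\zk)\simeq ET'\times_{T'}(\zk/H).
\]
(The residual $T'$-action on $\zk/H$ is canonical because $T^m$ is abelian, so the resulting bundle does not depend on the splitting.) This yields a homotopy fibration
\[
  \zk/H\longrightarrow BT^\sK\longrightarrow B(T^m/H),
\]
whose second map is the composite $BT^\sK\to BT^m\to B(T^m/H)$.

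Applying the Eilenberg--Moore spectral sequence to this fibration, the $E_2$-term is
\[
  E_2=\Tor_{H^*(B(T^m/H);R)}\bigl(H^*(BT^\sK;R),R\bigr)=\Tor_{H^*(B(T^m/H);R)}(R[\sK],R),
\]
converging to $H^*(\zk/H;R)$, and the module structure on $R[\sK]$ is exactly the one fixed before the statement. Since $T'$ is a torus, $H^*(B(T^m/H);R)=R[w_1,\ldots,w_n]$ (abbreviated $R[\mathbf w]$, with $n=\dim T'$) is a polynomial algebra on even generators; moreover $R[\mathbf w]$ and $R[\sK]$ are free $R$-modules with monomial bases. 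This freeness is what makes the algebraic $\Tor$ computable by the Koszul resolution of $R$ over $R[\mathbf w]$ for \emph{any} coefficient ring, and it supplies $\Tor_{R[\mathbf w]}(R[\sK],R)$ with a second, \emph{internal} grading inherited from the polynomial grading.

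The main obstacle is to prove that this spectral sequence collapses at $E_2$ with no multiplicative extension problems, over an arbitrary~$R$. Over a field this would follow from the standard collapse results for fibrations whose base has polynomial cohomology, but those arguments fail in finite characteristic and for nontrivial~$H$, so the general theorems of~\cite{gu-ma74} and~\cite{munk74} do not apply directly. My approach is to pass to Munkholm's category~\cat{dash} of strongly homotopy multiplicative maps, in which $\Tor$ is defined and homotopy invariant. Because $H^*(B(T^m/H);R)=R[\mathbf w]$ is polynomial, one can construct---over any $R$, using the $R$-freeness of $R[\mathbf w]$---an shm quasi-isomorphism $R[\mathbf w]\dashrightarrow C^*(B(T^m/H);R)$ exhibiting the \cat{dash}-formality of the cochain algebra of the base, its higher components encoding the homotopies forcing the chosen cocycle representatives of the generators to graded-commute. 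Inserting this shm map into the extended $\Tor$-functor, compatibly with the classifying map realising the module structure, should produce a natural multiplicative quasi-isomorphism between the Koszul model $\Tor_{R[\mathbf w]}(R[\sK],R)$ and the singular cochains of $\zk/H$; this simultaneously gives the collapse of the Eilenberg--Moore spectral sequence and trivialises all extension problems. The heart of the matter---and where one must go beyond the cited literature---is to build this shm formality and verify the induced isomorphism with coefficients in an arbitrary~$R$, using the $R$-freeness and the internal grading (which the shm map can be chosen to respect) to guarantee that no differential or extension crosses the internal degree.
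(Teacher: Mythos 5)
Your overall strategy---the Eilenberg--Moore spectral sequence of $\zk/H\to BT^\sK\to B(T^m/H)$ together with Munkholm's category \cat{dash} of strongly homotopy multiplicative maps---is exactly the framework of the paper's proof, and your identification of the fibration and the $E_2$-term is correct. But the step you single out as ``the heart of the matter,'' namely an shm quasi-isomorphism $R[\mathbf w]\dashrightarrow C^*(B(T^m/H);R)$, is not where the difficulty lies: since $H^*(B(T^m/H);R)$ is a polynomial algebra on even-degree generators, such an shm map is supplied directly by Munkholm (his Theorem~7.3 applies to the map $\varphi$ built from cocycle representatives of the generators and the cross-product; the $\mathit{Sq}_1$ hypothesis is vacuous because the cohomology is concentrated in even degrees). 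The genuine gap in your proposal is the \emph{total space}. To compare $\Tor_{H^*(B(T^m/H))}(R[\sK],R)$ with $\Tor_{C^*(B(T^m/H))}(C^*(BT^\sK),R)$ via extended functoriality you need a quasi-isomorphism $R[\sK]\Rightarrow C^*(BT^\sK)$ in \cat{dash} as well, compatible (up to homotopy in \cat{dash}) with the maps from the base; and $R[\sK]$ is \emph{not} a polynomial algebra, so Munkholm's formality result does not apply to it directly. The paper gets around this by writing $R[\sK]=\lim_{I\in\sK}H^*(BT^I)$ as a limit of polynomial algebras over the face category, observing that $C^*(BT^\sK)\to\lim_{I\in\sK}C^*(BT^I)$ is a quasi-isomorphism of dg algebras (by excision), assembling the shm maps $\varphi$ for the individual tori $T^I$ into a map of limits, and then verifying homotopy commutativity in \cat{dash} of the squares involving the base and the restriction maps $BT^I\to BT^J$. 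Your proposal contains no mechanism for producing the shm map on the total-space side, and ``inserting this shm map into the extended $\Tor$-functor \dots should produce a natural multiplicative quasi-isomorphism'' elides precisely this construction.

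Two further points. First, the multiplicativity of the resulting isomorphism of $\Tor$'s is not automatic from the \cat{dash}-functoriality and requires its own argument (the paper uses a K\"unneth-type comparison $\Tor_{B\otimes B}(M\otimes M,R\otimes R)\to\Tor_B(M,R)$ and a homotopy-commutative three-dimensional diagram); your appeal to the internal grading does not address this. Second, once one has a multiplicative isomorphism onto $\Tor_{C^*(B(T^m/H))}(C^*(BT^\sK),R)\cong H^*(\zk/H)$, the collapse of the spectral sequence and the absence of extension problems follow immediately---no separate weight or internal-degree argument is needed, and it is doubtful one could be made to work on its own, since the internal grading is not visible on the cochain level.
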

\begin{proof}
The Eilenberg--Moore spectral sequence of the homotopy fibration
$\zk/H\to BT^\sK\to B(T^m/H)$ has
$E_2=\Tor_{H^*(B(T^m/H))}(R[\sK],R)$ and converges to
$H^*(\zk/H)\cong\Tor_{C^*(B(T^m/H))}(C^*(BT^\sK),R)$. We shall
establish a multiplicative isomorphism
$\Tor_{H^*(B(T^m/H))}(R[\sK],R)\to\Tor_{C^*(B(T^m/H))}(C^*(BT^\sK),R)$;
it would also imply the collapse of the Eilenberg--Moore spectral
sequence.

For any torus $T^k$ we consider the map of $R$-modules
\[
  \varphi\colon H^*(BT^k)=(H^*(BT^1))^{\otimes k}\stackrel{i}\longrightarrow
  (C^*(BT^1))^{\otimes k}\stackrel\times\longrightarrow C^*(BT^k),
\]
where $C^*$ denotes the normalised singular cochain functor with
coefficients in~$R$, the map $i$ is the $k$-fold tensor product of
the map $H^*(BT^1)=R[v]\to C^*(BT^1)$ sending $v$ to any
representing cochain, and $\times$ is the $k$-fold cross-product.
The map $\varphi$ induces an isomorphism in cohomology.

Observe that $R[\sK]=H^*(BT^\sK)=\lim_{I\in\sK}H^*(BT^I)$ where
each $H^*(BT^I)$ is a polynomial ring on $|I|$ generators, the
(inverse) limit is taken in the category of graded algebras for
the diagram consisting of projections $H^*(BT^I)\to H^*(BT^J)$
corresponding to $J\subset I\in\sK$~\cite[3.5.1]{bu-pa15}. Now
consider the diagram
\begin{equation}\label{diagr}
\begin{array}{cccccl}
  R & \longleftarrow & H^*(B(T^m/H)) & \longrightarrow &
  \lim_{I\in\sK}H^*(BT^I) & =R[\sK]=H^*(BT^\sK)\\
  \| & & \Downarrow & & \Downarrow\\
  R & \longleftarrow & C^*(B(T^m/H)) & \longrightarrow &
  \lim_{I\in\sK}C^*(BT^I)\\
  \| & & \| & & \uparrow\\
  R & \longleftarrow & C^*(B(T^m/H)) & \longrightarrow &
  C^*(\colim_{I\in\sK}BT^I)&=C^*(BT^\sK)
\end{array}
\end{equation}
where the double arrows denote derivatives of~$\varphi$ and the
horizontal arrows on the right are induced by the maps $BT^I\to
BT^m\to BT^m/H$. All vertical arrows in~\eqref{diagr} induce
isomorphisms in cohomology (for the bottom right arrow this
follows from excision). If the diagram was commutative in the
category $\cat{da}$ of differential graded algebras (i.e.
consisted of multiplicative maps), then the standard functoriality
of $\Tor$ would have implied the required isomorphism
\[
  \Tor_{H^*(B(T^m/H))}(R[\sK],R)\cong\Tor_{C^*(B(T^m/H))}(C^*(BT^\sK),R)\cong
  H^*(\zk/H).
\]
The lower part of~\eqref{diagr} is indeed a commutative diagram
in~$\cat{da}$. The upper part is not commutative though, and the
double arrow maps are not morphisms in~$\cat{da}$ as $\varphi$ is
not multiplicative. Nevertheless $\Tor$ enjoys extended
functoriality with respect to morphisms in the
category~$\cat{dash}$, provided that the diagram~\eqref{diagr} is
homotopy commutative in~$\cat{dash}$, by~\cite[5.4]{munk74}. The
objects of $\cat{dash}$ are the same as in~$\cat{da}$, while
morphisms $A\Rightarrow A'$ are coalgebra maps $BA\to BA'$ of the
bar constructions. The map $\varphi$ and the double arrows
in~\eqref{diagr} are morphisms in~\cat{dash} by~\cite[7.3]{munk74}
(the extra condition on~$\mbox{\it Sq}_1$ is obviously satisfied
as $H^*(BT^k)$ is zero in odd degrees). To see that the upper
right square in~\eqref{diagr} is homotopy commutative, it is
enough to establish the homotopy commutativity of the diagram
\[
\begin{array}{ccccc}
  H^*(B(T^m/H)) & \longrightarrow & H^*(BT^I) & \longrightarrow & H^*(BT^J)\\
  \Downarrow & & \Downarrow & & \Downarrow\\
  C^*(B(T^m/H)) & \longrightarrow & C^*(BT^I) & \longrightarrow & C^*(BT^J)
\end{array}
\]
for any $J\subset I\in\sK$. The right square is commutative in the
standard sense by the construction of~$\varphi$ (note that we are
using normalised cochains), while the left square is homotopy
commutative by~\cite[7.3]{munk74}.

It remains to prove that the isomorphism
$\Tor_{H^*(B(T^m/H))}(R[\sK],R)\to\Tor_{C^*(B(T^m/H))}(C^*(BT^\sK),R)$
is multiplicative. We have a commutative diagram
\[
\begin{array}{ccccc}
  R\otimes R & \longleftarrow & C^*(B(T^m/H)) \otimes C^*(B(T^m/H)) & \longrightarrow &
  C^*(BT^\sK) \otimes C^*(BT^\sK)\\
  \downarrow & & \Downarrow & & \Downarrow\\
  R & \longleftarrow & C^*(B(T^m/H)) & \longrightarrow & C^*(BT^\sK)
\end{array}
\]
Using the functoriality of $\Tor$ in $\cat{dash}$ we get a natural
map
\[
  \Tor_{C^*(B(T^m/H)) \otimes C^*(B(T^m/H))}(C^*(BT^\sK) \otimes C^*(BT^\sK),R\otimes
  R)\to
  \Tor_{C^*(B(T^m/H))}(C^*(BT^\sK),R)
\]
which, composed with the classical K\"unneth-like map
\begin{multline*}
  \Tor_{C^*(B(T^m/H))}(C^*(BT^\sK),R)\otimes
  \Tor_{C^*(B(T^m/H))}(C^*(BT^\sK),R)\\\to
  \Tor_{C^*(B(T^m/H)) \otimes C^*(B(T^m/H))}(C^*(BT^\sK) \otimes C^*(BT^\sK),R\otimes
  R),
\end{multline*}
gives the multiplicative structure in
$\Tor_{C^*(B(T^m/H))}(C^*(BT^\sK),R)$. It can be checked that this
multiplicative structure is the same as the one defined via the
Eilenberg--Zilber theorem and used in the Eilenberg--Moore
isomorphism $\Tor_{C^*(B(T^m/H))}(C^*(BT^\sK),R)\cong H^*(\zk/H)$,
see~\cite[p.~46]{munk74}.

The product in $\Tor_{H^*(B(T^m/H))}(H^*(BT^\sK),R)$ is defined
similarly. Denote $B=C^*(B(T^m/H))$ and $M=C^*(BT^\sK)$. The
diagram
\[
\begin{array}{ccccc}
  \Tor_{HB}(HM,R)\otimes \Tor_{HB}(HM,R) & \longrightarrow &
  \Tor_{HB\otimes HB}(HM\otimes HM,R\otimes R) & \longrightarrow &
  \Tor_{HB}(HM,R)\\
  \downarrow & & \downarrow & & \downarrow\\
  \Tor_{B}(M,R)\otimes \Tor_{B}(M,R) & \longrightarrow &
  \Tor_{B\otimes B}(M\otimes M,R\otimes R) & \longrightarrow &
  \Tor_{B}(M,R)
\end{array}
\]
in which the vertical arrows are isomorphisms of $R$-modules, is
commutative, because the corresponding 3-dimensional diagram in
which each $\Tor_{B}(M,R)$ is replaced by $R\leftarrow B\to M$ is
homotopy commutative in $\cat{dash}$. Therefore, the $R$-module
isomorphism $\Tor_{HB}(HM,R)\to \Tor_{B}(M,R)$ is multiplicative
with respect to the multiplicative structure given.
\end{proof}

\begin{remark}
When $R$ is a field of zero characteristic, one can avoid
appealing to the category $\cat{dash}$ by using a commutative
cochain model in the argument above. One can also avoid
using~$\cat{dash}$ when $H$ is a trivial
subgroup~\cite[8.1.12]{bu-pa15}.
\end{remark}

Examples of quotients $\zk/H$ include compact toric manifolds
(when $H$ has maximal possible dimension), in which case $R[\sK]$
is a free $H^*(B(T^m/H))$-module, and Theorem~1 reduces to the
well-known description of the cohomology
(see~\cite[\S7.5]{bu-pa02}).

Another series of examples are `projective' moment-angle manifolds
$\zk/S^1_d$ corresponding to the diagonal subcircle
$H=S^1_d\subset T^m$. When $\sK$ is the boundary of a polytope,
$\zk/S^1_d$ admits a complex-analytic structure as an
\emph{LVM-manifold}~\cite{bo-me06}. In this case Theorem~1
together with the Koszul resolution gives the following
isomorphism:
\[
  H^*(\zk/S^1_d)\cong H(\Lambda[t_1,\ldots,t_{m-1}]\otimes R[\sK],d)
\]
where the cohomology of the differential graded algebra on the
right hand side is taken with respect to the differential $d
t_i=v_i-v_m$, $dv_j=0$, $\deg t_i=1$, see~\cite[7.39]{bu-pa02}.

\medskip

The author is grateful to Matthias Franz for drawing attention to
the incompleteness of the argument for~\cite[7.37]{bu-pa02} and
fruitful discussions.

\end{document}